\newtheorem{thm}{Theorem}
\newtheorem{mainthm}{Theorem}
\newtheorem{cor}[thm]{Corollary}
\newtheorem{lem}[thm]{Lemma}
\newtheorem{defn}[thm]{Definition}
\newtheorem{prop}[thm]{Proposition}
\def\CompressMatrices{\ifmmode \def\quad{\hskip.5em\relax}\fi}
\def\ad{\operatorname{ad}}
\def\gen#1{\left\langle #1 \right\rangle}
\def\ring#1{\left\langle #1 \right\rangle}
\def\ann#1#2{\operatorname{ann}_{#1}(#2)}
\def\rightonto{\twoheadrightarrow}
\def\rightinto{\hookrightarrow}
\def\card{\operatorname{card}}
\begin{document}

\title{On commutativity of ideal extensions.}
\author{Joachim Jelisiejew}
\date{}
\maketitle

\begin{abstract}
    In this paper we examine the commutativity of ideal extensions. We introduce methods of
    constructing such extensions, in particular we construct a noncommutative
    ring $T$ which contains a~central and idempotent ideal $I$
    such that $T/I$ is a field. This answers a question from
    \cite{puczylowski}. Moreover we classify fields of characteristic $0$ which can be obtained as
    $T/I$ for some $T$.
\end{abstract}

    \vspace{0.7cm}
    {\small\noindent\textbf{e-mail address:} \texttt{jj277546@students.mimuw.edu.pl}\\
    \textbf{keywords:} ideal ring extension, derivation, semidirect
    extension, idempotent ring\\
    \textbf{AMS Mathematical Subject Classification 2010:} Primary:~16S70;
    Secondary:~16D20.\\}

\section{Introduction}
\renewcommand{\themainthm}{\Alph{mainthm}}

All rings considered in this paper are associative but not necessarily with
unity. Throughout the paper we will use the following definition of (ideal) extension
\begin{defn}
    \emph{Ideal extension} of a given ring $I$ by a given ring $Q$ is a ring $R$,
    which contains a ideal $I'$ isomorphic (as a ring) to $I$ and such
    that $R/I' \simeq Q$.
\end{defn}

Ideal extensions of rings were studied in many papers. In \cite{sands, puczylowski}
the problem of commutativity of an ideal extension was studied in the case
when $I = I^2$ and both $I$ and $Q$ are commutative. It was proved there that
for a given commutative $I = I^2$, ideal extension of $I$ by arbitrary
commutative $Q$ is commutative iff $\ann{I}{I} := \{i\in I|\ iI = Ii = 0\} = 0$. In
\cite{puczylowski} it was also shown that if $Q$ is a field which is
algebraic over its prime subfield, then for arbitrary commutative $I = I^2$,
ideal extension of $I$ by $Q$ is commutative.
\def\sp{\emph{Idempotence problem}}%
\def\wsp{\emph{Centrality problem}}%
The problem of description of all fields for which this property holds was
left open. The main aim of this paper is to solve it.
Let us state explicitly
\begin{center}
    \emph{The }\sp{}. Describe fields $K$ such that for any commutative
    ring $I$ satisfying $I = I^2$ and any ideal extension $R$ of $I$ by $K$
    the ring $R$ is commutative.
\end{center}

Note that if $I = I^2$ is an ideal of $R$ and $I$ is commutative then $I$ is
contained in the center $Z(R)$ of $R$ (we will shortly say that $I$ is \emph{central}
in $R$). In this context it is natural to state
\begin{center}
    \emph{The }\wsp{}. Describe fields $K$ such that for any ring $R$ and
    central ideal $I$ of $R$ satisfying $R/I \simeq K$ the ring $R$ is
    commutative.
\end{center}
It turns out that the class of fields for the \wsp{} coincides with the class of fields
for the \sp{}.

In the first section of the article we introduce derivations and prove the following result,
which allows us to solve the \sp{} and the \wsp{} restricted to the fields of
characteristic zero

        \begin{mainthm}
            \label{maintwo}

            If $K$ is a field of characteristic $0$, then the following
            conditions are equivalent:
            \begin{enumerate}
                \item the transcendence degree of $K$ over $\mathbb{Q}$ is greater
                    than $1$,
                \item there exists two derivations from $K$ to $K$ whose kernels
                    are incomparable with respect to inclusion,
                \item there exists a field $L$ containing $K$ and two derivations
                    from $K$ to $L$ whose kernels
                    are incomparable with respect to inclusion.
            \end{enumerate}
\end{mainthm}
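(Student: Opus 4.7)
The implication (2)$\Rightarrow$(3) is immediate by taking $L=K$. Both (1)$\Rightarrow$(2) and (3)$\Rightarrow$(1) rest on the same technical input: since $\operatorname{char} K = 0$, every algebraic subextension of $K$ is separable, so any derivation $d : F \to M$ from a subfield $F \subseteq K$ to a field $M \supseteq F$ extends uniquely along an algebraic extension $F \subseteq F' \subseteq K$. Concretely, for $\alpha$ algebraic over $F$ with minimal polynomial $f \in F[x]$, one is forced to set $d(\alpha) = -f^{d}(\alpha)/f'(\alpha)$, where $f^{d}$ is obtained by applying $d$ to the coefficients of $f$; separability guarantees $f'(\alpha) \ne 0$, and a Zorn's lemma argument delivers a global extension to $F'$.

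For (1)$\Rightarrow$(2), I would fix a transcendence basis $B$ of $K$ over $\mathbb{Q}$ with $|B| \ge 2$ and pick distinct $t_1, t_2 \in B$. On $\mathbb{Q}(B)$ define partial derivations $\partial_i$ by $\partial_i(t_j) = \delta_{ij}$ for $j \in \{1,2\}$ and $\partial_i(b) = 0$ for $b \in B \setminus \{t_1, t_2\}$ ($i = 1, 2$). Each $\partial_i$ lifts uniquely to a derivation $d_i : K \to K$ by the extension principle above. Then $t_2 \in \ker d_1 \setminus \ker d_2$ and $t_1 \in \ker d_2 \setminus \ker d_1$, so $\ker d_1$ and $\ker d_2$ are incomparable.

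For (3)$\Rightarrow$(1), I would argue the contrapositive. If $\operatorname{trdeg}_{\mathbb{Q}} K = 0$, every derivation $K \to L$ vanishes on $\mathbb{Q}$ and, by the uniqueness of extensions, on all of $K$; both kernels equal $K$. If $\operatorname{trdeg}_{\mathbb{Q}} K = 1$, fix a transcendence basis $\{t\}$. A derivation $\partial : \mathbb{Q}(t) \to L$ is determined by $\partial(t) \in L$ and extends uniquely to $K \to L$, so $d \mapsto d(t)$ is an $L$-linear isomorphism from the $L$-module $\operatorname{Der}(K, L)$ to $L$. Hence any two derivations $d_1, d_2$ are $L$-proportional, and therefore have comparable kernels: if one is zero its kernel equals $K$ and contains the other's, while if $d_2 = \lambda d_1$ with $\lambda \in L^{\times}$ then $\ker d_1 = \ker d_2$.

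The main obstacle is the extension principle for derivations along separable algebraic extensions that underlies every step; once this is in place, the remaining arguments reduce to a straightforward membership check for the transcendental generators in (1)$\Rightarrow$(2) and to verifying $L$-linearity and uniqueness in (3)$\Rightarrow$(1), where the target is the larger field $L$ rather than $K$ itself.
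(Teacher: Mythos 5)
Your proposal is correct, and two of the three implications match the paper: the paper also proves (1)$\Rightarrow$(2) by taking a purely transcendental subfield $K'=\mathbb{Q}(B)$ over which $K$ is algebraic, using partial derivations with respect to two transcendental generators, and extending them along the (separable, since $\characteristic K=0$) algebraic extension $K'\subseteq K$; and (2)$\Rightarrow$(3) is declared obvious there too. Where you genuinely diverge is (3)$\Rightarrow$(1). The paper does not count derivations: it proves (Proposition \ref{algebraically-closed}) that the kernel $K^d$ of any derivation into a symmetric unitary bimodule is algebraically closed in $K$ whenever the relevant algebraic elements are separable, and then observes that a field of transcendence degree at most $1$ over its prime subfield has only two subfields algebraically closed in $K$ (the relative algebraic closure of the prime field, and $K$ itself), so any two kernels are comparable. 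You instead show that $\operatorname{Der}(K,L)$ is at most one-dimensional over $L$ when $\operatorname{trdeg}_{\mathbb{Q}}K\le 1$, via uniqueness of the extension of a derivation from $\mathbb{Q}(t)$ (or $\mathbb{Q}$) to $K$, so any two derivations are $L$-proportional and their kernels coincide or one is all of $K$. Both arguments rest on the same separability input; yours is self-contained once the existence-and-uniqueness extension theorem is granted, while the paper's lemma is stated in greater generality and is reused (as Corollary \ref{incomparabledontexists}) to cover fields algebraic over their prime subfield in arbitrary characteristic, which the paper needs later. Your linear-algebra argument, as written, is tied to characteristic $0$ (uniqueness of extensions fails for inseparable extensions), but that is exactly the hypothesis of the theorem, so the proof stands.
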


The characteristic zero assumption in Theorem \ref{maintwo} may be weakened --
see Corollary \ref{incomparabledontexists}.
\def\ext#1#2{T\left(#1, #2\right)}

In the two further sections of the paper we provide two constructions of ideal extensions.
The former of those allows us to solve the \wsp{}, but to solve the
\sp{} we need to use also the latter construction. The former construction
has applications not connected with
the subject of this article; a basic example is
given in the comments section.

Based on the constructions we prove the following main result of
the paper

\begin{mainthm}
    \label{mainone}
    For a given field $K$, the following conditions are equivalent
    \begin{enumerate}
        \item there exists a field $L$ containing $K$ and two derivations
            from $K$ to $L$ whose kernels
            are incomparable with respect to inclusion,
        \item there exists a noncommutative ring $R$ and its ideal $I$ such that $I$ is
            central in $R$, $R/I \simeq K$ and $I^2 = 0$,
        \item there exists a noncommutative ring $R$ and its ideal $I$ such that $I$ is
            commutative as a ring, $I =
            I^2$ and $R/I \simeq K$.
        \item there exists a noncommutative ring $R$ and its ideal $I$ such that $I$ is
            central in $R$ and $R/I \simeq K$.
    \end{enumerate}
\end{mainthm}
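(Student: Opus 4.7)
The plan is to establish the cycle $(2)\Rightarrow(4)\Rightarrow(1)\Rightarrow(2)$ together with the loop $(1)\Rightarrow(3)\Rightarrow(4)$.

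The implications landing at $(4)$ are immediate: $(2)\Rightarrow(4)$ only drops hypotheses, and $(3)\Rightarrow(4)$ follows from the introductory remark that a commutative ideal with $I=I^2$ is automatically central.

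For $(1)\Rightarrow(2)$, given derivations $d_1,d_2\colon K\to L$ with incomparable kernels, I would realize $R$ as the ring of matrices
\[
M_{k,\ell}=\begin{pmatrix} k & d_1(k) & \ell \\ 0 & k & d_2(k) \\ 0 & 0 & k \end{pmatrix}\in M_3(L),\qquad k\in K,\ \ell\in L,
\]
with $I$ the set of $M_{0,\ell}$. The Leibniz rule gives closure under multiplication; $I^2=0$ and centrality of $I$ follow from commutativity of $L$ together with the shape of strictly $(1,3)$ matrices; and the diagonal map $R\to K$ has kernel $I$. A direct computation yields $[M_{a,0},M_{b,0}]_{1,3}=d_1(a)d_2(b)-d_1(b)d_2(a)$, which is nonzero once we pick $a\in\ker d_2\setminus\ker d_1$ and $b\in\ker d_1\setminus\ker d_2$, elements that exist by incomparability of the kernels.

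For $(4)\Rightarrow(1)$, given $R,I,K$ as in $(4)$, centrality of $I$ makes $I$ into a $K$-module and, for each $r\in R$, turns the map $\delta_r(\bar x):=rx-xr$ into a well-defined derivation $\delta_r\colon K\to I$. Pick $a,b\in R$ with $[a,b]\neq 0$. Then $\delta_a(\bar a)=\delta_b(\bar b)=0$, while $\delta_a(\bar b)=[a,b]\neq 0$ and $\delta_b(\bar a)=-[a,b]\neq 0$, so $\ker\delta_a$ and $\ker\delta_b$ are already incomparable. To convert these into derivations into a field $L\supseteq K$, compose with $K$-linear functionals $\varphi_a,\varphi_b\colon I\to K$ chosen so that $\varphi_a\circ\delta_a$ still vanishes on $\bar a$ but not on $\bar b$, and vice versa for $\varphi_b\circ\delta_b$; any extension from a $K$-basis of $I$ containing the witnesses provides such functionals.

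The main obstacle is $(1)\Rightarrow(3)$, which calls for the latter construction promised in the introduction. The goal is a noncommutative $R$ in which $I$ is commutative and, crucially, idempotent rather than square-zero. The natural attempt is to enlarge the top-right component of the matrix model of $(1)\Rightarrow(2)$ so that it carries a nontrivial commutative multiplication, for example built from the field $L$ itself, while correcting the twist $d_1\otimes d_2$ so as to preserve associativity and maintain centrality, commutativity and idempotence of $I$. Constructing and verifying this ``latter'' model is the technical heart of the proof.
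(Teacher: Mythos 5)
Your overall plan (the cycle $1\Rightarrow 2$, $2\Rightarrow 4$, $1\Rightarrow 3$, $3\Rightarrow 4$, $4\Rightarrow 1$) is structurally sound, and your $1\Rightarrow 2$ is exactly the paper's Construction \ref{extconstruction} together with Corollary \ref{commtononcomm}. However, your $4\Rightarrow 1$ has a genuine gap: you assert that centrality of $I$ makes $I$ a $K$-module, but the action $\bar r\cdot i:=ri$ of $K=R/I$ on $I$ is well defined only when $I$ kills itself, i.e.\ essentially when $I^2=0$ --- and in condition (4) the ideal $I$ may perfectly well be idempotent. The correct move (the paper's Proposition \ref{DoesNotExistThm}) is to take the target to be $M=[R,R]$ and use centrality of $I$ to show $I\cdot M=M\cdot I=0$, so that $M$, not $I$, carries the induced $(K,K)$-bimodule structure; your talk of ``a $K$-basis of $I$'' and ``$K$-linear functionals $I\to K$'' is therefore unjustified as written. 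A second omission: to treat the target as a $K$-vector space (and to guarantee that some functional is nonzero on $[a,b]$) you need the bimodule to be unitary, i.e.\ that a preimage $r\in R$ of $1\in K$ acts identically on commutators; since $R$ need not have a unity this is not automatic, and the paper obtains it from the nontrivial fact (\cite{puczylowski}, Prop.~5.1.ii) that such preimages are central in $R$. Granting these two repairs, your functional trick does work and even produces incomparable derivations from $K$ to $K$ itself (composition of a derivation into a symmetric unitary bimodule with a $K$-linear map is again a derivation), where the paper instead embeds $M$ into a purely transcendental extension $L\supseteq K$; so this step is salvageable but incomplete as it stands.

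The decisive gap is the implication into (3), which you explicitly leave as a sketch, and the sketched direction would fail. If the idempotent commutative ideal is ``built from the field $L$ itself'' --- or from any commutative idempotent ring $I$ with $\operatorname{ann}_I(I)=0$ --- then by the theorem of \cite{sands,puczylowski} quoted in the introduction every extension of it with commutative quotient is commutative, so no adjustment of the twist in the matrix model can produce noncommutativity. One must use a commutative idempotent ring with \emph{nonzero} annihilator, and the extension is not obtained by enlarging the $(1,3)$-corner: the paper proves $2\Rightarrow 3$ by a separate gluing construction (Proposition \ref{merging-lemma}). Namely, let $S$ be the square-zero central extension from (2) with ideal $\mathfrak m$ and let $Z$ be the Zassenhaus algebra over $K$ (commutative, $Z^2=Z$, annihilator $Kx^1$); form the semidirect extension $R'$ of $Z$ by $S$ (with $S$ acting through $S/\mathfrak m\simeq K$) and divide by the ideal $J=\{(\psi(j),j)\ :\ j\in\mathfrak m\}$, where $\psi$ sends a chosen nonzero commutator $m=xy-yx\in\mathfrak m$ onto a nonzero annihilator element $a\in Z$ modulo $\mathfrak m^2$. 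In the quotient $T=R'/J$ the image of $m$ equals $-\psi(m)\neq 0$, so $T$ is a noncommutative extension of $Z=Z^2$ by $K$. Without a construction of this kind the hardest implication of the theorem remains unproved.
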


Joining theorems \ref{maintwo} and \ref{mainone} one obtains
\begin{cor}
    The class of fields for \sp{} coincides with the class of fields for
    \wsp{} and the class of fields $K$ such that there exist a field $L$
    containing $K$ and two derivations from $K$ to $L$ whose kernels are
    incomparable with respect to inclusion.

    When we restrict to the class of fields of characteristic zero then the
    above classes coincide with the class of fields having transcendence
    degree at most one over their prime subfields.
\end{cor}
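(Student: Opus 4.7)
The plan is to derive the corollary as a purely logical consequence of Theorems~\ref{maintwo} and~\ref{mainone}, combined with correct negations of the universal statements defining the \sp{} and the \wsp{}.

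First I unwind the two problems. A field $K$ belongs to the class for the \sp{} iff there is \emph{no} noncommutative ring $R$ containing an ideal $I$ with $I$ commutative, $I = I^2$ and $R/I \simeq K$; that is, iff condition~(3) of Theorem~\ref{mainone} fails for $K$. Likewise, $K$ belongs to the class for the \wsp{} iff condition~(4) of Theorem~\ref{mainone} fails for $K$. Since Theorem~\ref{mainone} asserts the equivalence of conditions (1), (3), and (4), the negations of (3) and (4) are simultaneously equivalent to the negation of (1). Hence the two problem classes coincide with each other and with the class of those fields $K$ such that no field $L \supseteq K$ carries two derivations $K \to L$ with incomparable kernels.

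For the characteristic zero statement, I invoke Theorem~\ref{maintwo}, whose condition~(3) is literally condition~(1) of Theorem~\ref{mainone}. The equivalence (1)~$\Leftrightarrow$~(3) in Theorem~\ref{maintwo} says that a characteristic zero field $K$ admits such derivations iff its transcendence degree over $\mathbb{Q}$ exceeds $1$. Negating, the characteristic zero fields in the common class are exactly those of transcendence degree at most one over $\mathbb{Q}$, i.e.\ over the prime subfield, as required.

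The main obstacle is essentially trivial: the mathematical substance is packed entirely into the two main theorems, so the only care needed is to correctly negate the existential statements~(3) and~(4) of Theorem~\ref{mainone} and match them with the definitions of the \sp{} and the \wsp{}, together with the identification of condition~(3) of Theorem~\ref{maintwo} with condition~(1) of Theorem~\ref{mainone}.
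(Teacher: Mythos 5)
Your proposal is correct and takes essentially the same route as the paper, which proves the corollary simply by combining Theorems~\ref{maintwo} and~\ref{mainone}; the only content is the careful negation of conditions (3) and (4) of Theorem~\ref{mainone} against the definitions of the two problems, which you carry out correctly. Note that you (reasonably) interpret the first clause of the corollary as the class of fields admitting \emph{no} incomparable derivations into any extension field, which is the reading forced by the characteristic-zero statement about transcendence degree at most one and clearly the intended one.
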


\section{Derivations and their kernels}

    The structure of the derivations has many important connections with
    extensions. In this section we will develop the machinery used in the rest
    of the paper and prove Theorem \ref{maintwo}.

\begin{defn}
    If $M$ is a left $S$ module and a right $R$ module, then $M$ is a $(S,
    R)$--\emph{bimodule} iff $(s\cdot m)\cdot r = s\cdot (m \cdot r)$ for all $s\in
    S, m\in M, r\in R$. When dealing with bimodules we omit parentheses.

    If ring $R$ has a unity then a $(R, R)$--bimodule $M$ is \emph{unitary}
    iff $1\cdot m = m\cdot 1 = m$ for all $m\in M$.
\end{defn}

\begin{defn}
    A \emph{derivation} from a ring $R$ into a $(R, R)$--bimodule $M$
    is a map $d:R \to M$ satisfying
    \begin{enumerate}
        \item $d(x) + d(y) = d(x+y)$,
        \item $d(xy) = x\cdot d(y) + d(x)\cdot y$,
    \end{enumerate}
    for all $x, y\in R$.

    The set $R^d:=\left\{ r\in R\ |\ d(r) = 0 \right\}$ is a subring of
    $R$ referred to as the kernel of $d$.
\end{defn}

\begin{defn}
    Derivations $d_1, d_2:R\to M$ are called \emph{incomparable} if their
    kernels are incomparable with respect to inclusion, i.e., if
    there exist $x_1, x_2\in R$ such that
    \[
    d_1(x_1) \neq 0,\ \ d_1(x_2) = 0,\ \ d_2(x_1) = 0,\ \ d_2(x_2) \neq 0.
    \]
\end{defn}

\begin{lem}
    Let $K$ be a field and $d:K  \to M$ be a derivation. If $M$ is a
    unitary $(K,K)$--bimodule, then $K^d$ is a subfield of $K$.
\end{lem}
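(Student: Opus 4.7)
The plan is to show that $K^d$, already known to be a subring of $K$, is closed under taking multiplicative inverses of nonzero elements; together with $K^d$ being a unital subring, this makes it a subfield of $K$.

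First I would verify $d(1) = 0$. Applying the Leibniz rule to $1 = 1 \cdot 1$ gives $d(1) = 1 \cdot d(1) + d(1) \cdot 1$. Using that $M$ is a \emph{unitary} $(K,K)$-bimodule, the right-hand side equals $d(1) + d(1) = 2d(1)$, and subtracting $d(1)$ yields $d(1) = 0$. Hence $1 \in K^d$, so $K^d$ is a unital subring of $K$.

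Next, for a nonzero $x \in K^d$ I would apply $d$ to the identity $x \cdot x^{-1} = 1$. The Leibniz rule and the fact $d(1) = 0$ give
\[
0 = d(1) = d(x \cdot x^{-1}) = x \cdot d(x^{-1}) + d(x) \cdot x^{-1} = x \cdot d(x^{-1}),
\]
using $d(x) = 0$ in the last step. Now I would multiply on the left by $x^{-1}$ inside $M$: since $M$ is a left $K$-module and unitary, $x^{-1} \cdot (x \cdot d(x^{-1})) = (x^{-1} x) \cdot d(x^{-1}) = 1 \cdot d(x^{-1}) = d(x^{-1})$, and the left-hand side equals $x^{-1} \cdot 0 = 0$. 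Therefore $d(x^{-1}) = 0$, i.e. $x^{-1} \in K^d$.

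The argument is essentially routine; the only genuine point is to use unitarity of $M$ twice, once to deduce $d(1) = 0$ (without it the computation $d(1) = 2d(1)$ is meaningless) and once to collapse $x^{-1} \cdot x \cdot d(x^{-1})$ to $d(x^{-1})$. No additional obstacle arises.
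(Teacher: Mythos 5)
Your proof is correct and follows essentially the same route as the paper: show $d(1)=0$ from the Leibniz rule and unitarity, then apply $d$ to $xx^{-1}=1$ and cancel $x$ to get $d(x^{-1})=0$ when $d(x)=0$. You even spell out the left-multiplication-by-$x^{-1}$ step that the paper leaves implicit, so nothing is missing.
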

\begin{proof}
    We already know that $K^d$ is a subring of $K$.
    Note that $d(1) = d(1\cdot 1) = 1\cdot d(1) + d(1)\cdot 1 = d(1) + d(1)$, so $1\in
    K^d$.
    For every $0\neq x\in K$ we have $0 = d(1) = d(xx^{-1}) = x \cdot
    d(x^{-1}) + d(x)\cdot x^{-1}$, so if $d(x) = 0$ then $d(x^{-1}) = 0$.
    Consequently $K^d$ is closed with respect to taking inverses, so it is a
    subfield.
\end{proof}

\newpage
\begin{prop}
    \label{algebraically-closed}
    Let $K$ be a field and $M$ an unitary $(K, K)$--bimodule such that $k\cdot m = m\cdot k$ for all $k\in
    K, m\in M$.  If $d:K  \to M$ is a derivation such that every element of $K$
    which is algebraic over $K^d$ is separable then $K^d$ is algebraically closed in
    $K$.
\end{prop}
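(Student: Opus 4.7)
The plan is to show that any $x\in K$ algebraic over $K^d$ actually lies in $K^d$. So fix such an $x$ and let $p(T)=\sum_{i=0}^{n} a_i T^i \in K^d[T]$ be its minimal polynomial; by hypothesis $p$ is separable, so the formal derivative $p'(T)=\sum_{i=1}^{n} i a_i T^{i-1}$ satisfies $p'(x)\neq 0$ in $K$.

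The key computation is to apply $d$ to the relation $p(x)=0$ and read off that $p'(x)\cdot d(x)=0$. For this I first note that because $M$ is a $(K,K)$-bimodule with $k\cdot m = m\cdot k$, an easy induction on $i$ using the Leibniz rule gives
\[
d(x^{i}) \;=\; x\cdot d(x^{i-1}) + d(x)\cdot x^{i-1} \;=\; i\,x^{i-1}\cdot d(x),
\]
where the integer $i$ acts on $M$ via the unit of $K$ (which makes sense because $M$ is unitary). Since every $a_i$ lies in $K^d$, additivity of $d$ together with $d(a_i x^i)=a_i\cdot d(x^i)$ yields
\[
0 \;=\; d(p(x)) \;=\; \sum_{i=1}^{n} a_i\cdot i\, x^{i-1}\cdot d(x) \;=\; p'(x)\cdot d(x).
\]

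To conclude, I use that $K$ is a field: separability of $x$ over $K^d$ forces $p'(x)\neq 0$, hence $p'(x)$ is invertible in $K$. Multiplying $p'(x)\cdot d(x)=0$ on the left by $p'(x)^{-1}$ (using again bimodule-commutativity to move scalars freely) gives $d(x)=0$, i.e. $x\in K^d$, as desired.

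The only real subtlety is justifying the identity $d(x^i)=i x^{i-1}d(x)$ in the bimodule $M$, which relies crucially on the hypothesis $k\cdot m = m\cdot k$; without it one would only get the noncommutative version $\sum_{j+k=i-1} x^{j} d(x) x^{k}$, which need not simplify. Everything else is a mechanical transcription of the classical proof that the constants of a derivation form an algebraically closed subfield in the separable case.
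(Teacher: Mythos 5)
Your proposal is correct and follows essentially the same route as the paper: induct to get $d(x^i)=i\,x^{i-1}\cdot d(x)$ using the bimodule-commutativity hypothesis, apply $d$ to the minimal polynomial relation to obtain $p'(x)\cdot d(x)=0$, then use separability and invertibility of $p'(x)$ in $K$ to conclude $d(x)=0$. No gaps; the remark about where $k\cdot m=m\cdot k$ is needed matches the paper's use of that hypothesis.
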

\begin{proof}
    Let $\alpha\in K$ be algebraic over $K^d$, $f\in K^d[x]$ be its minimal
    polynomial.
    By induction we have $d(\alpha^n) = n\alpha^{n-1}\cdot
    d(\alpha)$ for every positive integer $n$, hence
    \[
    0 = d(0) = d(f(\alpha)) = f'(\alpha)\cdot d(\alpha).
    \]
    Now $\alpha$ is separable, so $f'(\alpha) \neq 0$. Consequently
    $d(\alpha) = 1\cdot d(\alpha) =
    \left(f'(\alpha)\right)^{-1}\cdot(f'(\alpha)\cdot d(\alpha)) = 0$,
    $\alpha\in K^d$.
\end{proof}

\begin{cor}
    \label{incomparabledontexists}
    If $K$ is a field which is algebraic over its prime subfield or $K$ is
    of characteristic $0$ and the transcendence degree of $K$ over
    $\mathbb{Q}$ is smaller than $2$, then for any field $L$ containing $K$
    every two derivations from $K$ to $L$ are comparable.
\end{cor}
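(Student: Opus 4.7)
The strategy is to apply Proposition \ref{algebraically-closed} to each derivation $d: K \to L$ separately, with $M = L$ regarded as a unitary $(K,K)$-bimodule via field multiplication (the left and right actions of $K$ coincide because $L$ is commutative). In both hypothesis cases, the kernel $K^d$ is a subfield of $K$ containing the prime subfield $P$, by the preceding lemma. I plan to verify the separability assumption of Proposition \ref{algebraically-closed} in each case, conclude that $K^d$ is algebraically closed in $K$, and then use a transcendence degree argument to force either $d = 0$ or all nonzero derivations to have a common kernel; either way this gives comparability.

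First, suppose $K$ is algebraic over $P$. In characteristic $0$ separability is automatic; in characteristic $p > 0$, $P = \mathbb{F}_p$ is perfect so $K/P$ is separable, and since a divisor of a separable polynomial is separable, every element of $K$ is also separable over $K^d \supseteq P$. Proposition \ref{algebraically-closed} then yields that $K^d$ is algebraically closed in $K$. But $K$ is algebraic over $P \subseteq K^d$, so $K^d = K$, i.e., $d = 0$. Hence every derivation from $K$ to $L$ vanishes and any pair is trivially comparable.

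Next suppose $\operatorname{char} K = 0$ and the transcendence degree of $K$ over $\mathbb{Q}$ is at most $1$. The transcendence degree $0$ case falls under the preceding paragraph, so assume it equals $1$. Since characteristic $0$ makes all algebraic extensions separable, Proposition \ref{algebraically-closed} applies to every derivation $d$, giving that $K^d$ is algebraically closed in $K$. If $\operatorname{tr.deg}(K^d/\mathbb{Q}) = 1$, then $K$ is algebraic over $K^d$, and algebraic closedness forces $K^d = K$, whence $d = 0$. Otherwise $K^d$ is algebraic over $\mathbb{Q}$; being simultaneously algebraic over $\mathbb{Q}$ and algebraically closed in $K$, it must coincide with the algebraic closure of $\mathbb{Q}$ inside $K$. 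This field is independent of $d$, so any two nonzero derivations have identical kernels, while a zero derivation is trivially comparable with any other.

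The only nonformal point I anticipate is the verification of the separability hypothesis in positive characteristic: one must argue that separability of $K$ over $\mathbb{F}_p$ descends to separability of $K$ over the intermediate field $K^d$, which is the classical divisor-of-separable argument. Once Proposition \ref{algebraically-closed} is applicable, the remaining transcendence degree bookkeeping is straightforward.
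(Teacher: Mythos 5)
Your proof is correct and follows essentially the same route as the paper: apply Proposition \ref{algebraically-closed} to each derivation (checking separability via perfectness of the prime subfield, respectively characteristic $0$) to conclude that $K^d$ is algebraically closed in $K$, and then use the bound on transcendence degree to see that the only possible kernels are $K$ itself and the algebraic closure of the prime subfield in $K$, which form a chain. Your case analysis merely spells out in detail what the paper summarizes as ``there are at most two algebraically closed (in $K$) subfields of $K$.''
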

\begin{proof}
    The assumptions imply that every subfield of $K$ is perfect, so for any
    derivation $d:K\to L$ the separability condition from proposition \ref{algebraically-closed} is satisfied
    and we deduce that $K^d$ is algebraically closed in $K$.

    Since the transcendence degree of $K$ over its
    prime field is at most $1$, there are at most two algebraically closed
    (in $K$) subfields of $K$ (the closure of the prime subfield and $K$
    itself) so
    any two derivations are comparable.
\end{proof}

    \setcounter{mainthm}{0}
    \begin{mainthm}
        If $K$ is a field of characteristic $0$, then the following
        conditions are equivalent:
        \begin{enumerate}
            \item\label{maintwo1} the transcendence degree of $K$ over $\mathbb{Q}$ is greater
                than $1$,
            \item\label{maintwo2} there exists two derivations from $K$ to $K$ whose kernels
                are incomparable with respect to inclusion,
            \item\label{maintwo3} there exists a field $L$ containing $K$ and two derivations
                from $K$ to $L$ whose kernels
                are incomparable with respect to inclusion,
        \end{enumerate}
\end{mainthm}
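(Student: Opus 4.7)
The plan is to close the cycle $(2)\Rightarrow(3)\Rightarrow(1)\Rightarrow(2)$. The implication $(2)\Rightarrow(3)$ is immediate, since one may take $L=K$ in condition (3). The implication $(3)\Rightarrow(1)$ is exactly the contrapositive of Corollary \ref{incomparabledontexists} specialized to characteristic zero: if the transcendence degree were at most $1$, then for any extension field $L\supseteq K$, any two derivations $K\to L$ would be comparable.

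The nontrivial content is $(1)\Rightarrow(2)$, and here I would argue by explicit construction. Assume $\mathrm{trdeg}_{\mathbb{Q}} K \ge 2$ and pick algebraically independent elements $t_1, t_2 \in K$. Extend $\{t_1,t_2\}$ to a full transcendence basis $B$ of $K$ over $\mathbb{Q}$. Then the subfield $F := \mathbb{Q}(B)$ is a purely transcendental extension of $\mathbb{Q}$, so the formal partial derivatives with respect to $t_1$ and $t_2$ give well-defined derivations $\partial_1,\partial_2 : F \to F \subseteq K$. These satisfy
\[
    \partial_i(t_j) = \delta_{ij} \quad \text{for } i,j\in\{1,2\}.
\]

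It remains to lift $\partial_1$ and $\partial_2$ to derivations $d_1, d_2 : K \to K$. Since $\mathrm{char}\, K = 0$, the algebraic extension $K/F$ is separable, and a standard result on derivations guarantees that any derivation from $F$ into a field containing $F$ extends to a derivation defined on any separable algebraic extension (one shows by induction on algebraic elements $\alpha$ that the value $d(\alpha)$ is forced by $0 = d(f(\alpha)) = f'(\alpha)\cdot d(\alpha) + f^{d}(\alpha)$, where $f\in F[x]$ is the minimal polynomial, and separability ensures $f'(\alpha)\neq 0$; a transfinite/Zorn argument then produces an extension to all of $K$). Applying this to $\partial_1$ and $\partial_2$ yields $d_1,d_2 : K\to K$ with $d_i(t_j)=\delta_{ij}$, so $t_1\in K^{d_2}\setminus K^{d_1}$ and $t_2\in K^{d_1}\setminus K^{d_2}$, proving incomparability.

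The only real obstacle is the extension step, but in characteristic zero separability is automatic, so the classical derivation extension theorem applies directly; no calculation beyond verifying $d_i(t_j)=\delta_{ij}$ is needed to finish the argument.
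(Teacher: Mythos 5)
Your proposal is correct and follows essentially the same route as the paper: it handles $(2)\Rightarrow(3)$ trivially, gets $(3)\Rightarrow(1)$ from Corollary \ref{incomparabledontexists}, and proves $(1)\Rightarrow(2)$ by taking partial derivatives on a purely transcendental subfield $\mathbb{Q}(B)$ over which $K$ is algebraic and extending them to $K$ via the classical extension theorem for derivations in characteristic zero (the paper cites Zariski--Samuel for this step). Your explicit witnesses $d_i(t_j)=\delta_{ij}$ make the incomparability slightly more concrete, but the argument is the same.
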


\begin{proof}
    $\ref{maintwo1}.\implies\ref{maintwo2}.$
    It is known (e.g. \cite{zariski}) that if $L' \subseteq L$ is an algebraic extension of fields of characteristic
    $0$, then every derivation $d:L'\to L'$ extends to a derivation $\hat{d}: L
    \to L$.

    Let $K' \subseteq K$ be a purely transcendental extension of
    $\mathbb{Q}$ such that $K' \subseteq K$ is algebraic. Since $K'$ is
    isomorphic to the field
    of rational functions over $\mathbb{Q}$ in at least two variables,
    there exist two incomparable derivations $d_1, d_2:K'\to K'$ (e.g. the partial
    derivations associated to two different variables). The desired derivations
    are extensions of $d_1, d_2$ to derivations from $K$ to $K$.

    $\ref{maintwo2}\implies\ref{maintwo3}$. Obvious.

    $\ref{maintwo3}\implies\ref{maintwo1}$. Direct consequence of corollary \ref{incomparabledontexists}.
\end{proof}

\def\M{\mathbb{M}}
\def\m{\mathfrak{m}}
\section{Construction 1}

 The following easy to check proposition
contains a method of building a certain ring from a given
ring, subring and two derivations.
\begin{prop}
    \label{mainconstruction}
    \label{extconstruction}
    Let $R$ be a ring, $S$ be its subring and $d_1,d_2:S\to R$ be derivations
    with respect to the natural $(S,S)$-bimodule structure on $R$. Then
    \begin{enumerate}
        \item   \[
            \ext{S}{R}:=\left\{ \begin{pmatrix}f & d_1(f) & g\\0 & f & d_2(f)\\ 0 & 0 & f\end{pmatrix}\
                    \Big|\ f\in S, g \in R\right\} \subseteq \M_3(R)
                \]
                is a subring of $\M_3(R)$,
        \item \[\m = \left\{ \begin{pmatrix}0 & 0 & g\\0 & 0 & 0\\ 0 & 0 & 0\end{pmatrix}\
            \Big|\ g \in R\right\}\]
            is an ideal of $\ext{S}{R}$,
        \item $\m^2 = 0$ and the additive subgroups of $\m$ and $R$ are
            isomorphic,
        \item $\ext{S}{R}/\m  \simeq  S$, so $\ext{S}{R}$ is an extension of $\m$ by
            $S$.
    \end{enumerate}
\end{prop}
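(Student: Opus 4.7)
The proof is essentially a computation; the only nonroutine point is verifying closure under multiplication, where the Leibniz rule for $d_1$ and $d_2$ is exactly what is needed.

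My plan is to begin by picking two generic elements of $\ext{S}{R}$, say
\[
A_i = \begin{pmatrix} f_i & d_1(f_i) & g_i \\ 0 & f_i & d_2(f_i) \\ 0 & 0 & f_i \end{pmatrix}, \qquad i = 1, 2,
\]
and multiplying them out. Additivity of $d_1, d_2$ gives closure under addition trivially. For the product $A_1 A_2$, the diagonal entries are $f_1 f_2 \in S$. The $(1,2)$-entry is $f_1\cdot d_1(f_2) + d_1(f_1)\cdot f_2$, which equals $d_1(f_1 f_2)$ by the derivation property, and similarly the $(2,3)$-entry equals $d_2(f_1 f_2)$. The $(1,3)$-entry is $f_1 g_2 + d_1(f_1)\cdot d_2(f_2) + g_1 f_2 \in R$. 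Thus $A_1 A_2$ again has the required form (with $f = f_1f_2 \in S$ and the new $(1,3)$-entry in $R$), which proves (1).

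For (2), I would note that $\m$ is an additive subgroup, and that multiplying any $A \in \ext{S}{R}$ by a matrix whose only nonzero entry $g'$ sits in position $(1,3)$ annihilates every slot except $(1,3)$, which becomes $fg'$ (on the left) or $g'f$ (on the right). Both lie in $R$, so $\m$ is closed under two-sided multiplication and hence is an ideal. For (3), the product of any two elements of $\m$ is manifestly the zero matrix, and the assignment $g \mapsto E_{13}(g)$ (the matrix with $g$ in slot $(1,3)$, zero elsewhere) is an additive isomorphism $R \to \m$. For (4), the map $\ext{S}{R} \to S$ sending $A$ to its common diagonal entry $f$ is easily seen to be surjective, is a ring homomorphism by the diagonal computation in step (1), and has kernel exactly $\m$; the first isomorphism theorem then gives $\ext{S}{R}/\m \simeq S$.

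No single step is a serious obstacle; the only place where one might slip is in the $(1,2)$ and $(2,3)$ entries of the product, where it is crucial that the derivation property holds with respect to the $(S,S)$-bimodule structure that $R$ inherits from its own multiplication — this is why the hypothesis ``with respect to the natural $(S,S)$-bimodule structure on $R$'' appears in the statement. Everything else reduces to bookkeeping inside $\M_3(R)$.
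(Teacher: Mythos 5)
Your verification is correct, and it is exactly the routine matrix computation the paper has in mind (the paper states the proposition as ``easy to check'' and omits the proof): the Leibniz rule gives the $(1,2)$ and $(2,3)$ entries of a product, the $(1,3)$ slot absorbs everything else, and the diagonal map onto $S$ with kernel $\m$ yields the quotient statement. Nothing further is needed.
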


\paragraph{Remark}
    Let $R$ be a ring and $d:R\to R$ be a derivation. It is known
    (e.g. \cite{herstein}) that
    \[
    \left\{ \begin{pmatrix}f & d(f)\\0 & f\end{pmatrix}\
        \Big|\ f \in R\right\} \subseteq \M_2(R).
    \]
    is a subring of $\M_2(R)$ isomorphic to $R$; our construction is a generalization of this result.

\begin{cor}
    \label{commtononcomm}
    Let $R$ be a ring, $S$ be a central subring of $R$ and $d_1,d_2:S\to R$ be incomparable derivations such that
    $d_1(a)d_2(b) \neq 0$ whenever $d_1(a),d_2(b)\neq 0$. Then
    \begin{enumerate}
        \item $\ext{S}{R}$ is noncommutative.
        \item $\m$ is central in $\ext{S}{R}$ so $\ext{S}{R}$ is an extension of
            $\m$ by $S$ such that $\m\subseteq Z(\ext{S}{R})$.
    \end{enumerate}
\end{cor}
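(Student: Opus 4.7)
The proof will be a direct matrix computation in both parts, with the centrality of $S$ in $R$ used to handle (2) and the incomparability (together with the nonvanishing hypothesis on products $d_1(a)d_2(b)$) used to produce a concrete non-commuting pair in (1).

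For part (2), I would take an arbitrary $m \in \m$ with parameter $g \in R$ (so $g$ sits in position $(1,3)$ and every other entry is zero) and an arbitrary element $A \in \ext{S}{R}$ with parameters $f \in S$, $g' \in R$. A direct computation of $m A$ and $A m$ in $\M_3(R)$ shows that all entries of both products are zero except possibly the $(1,3)$ entry, which equals $g f$ in $m A$ and $f g$ in $A m$. Since $S$ is central in $R$ these agree, so $\m \subseteq Z(\ext{S}{R})$; the fact that $\m$ is an ideal and that $\ext{S}{R}/\m \simeq S$ was already recorded in Proposition \ref{extconstruction}.

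For part (1), I would invoke incomparability to pick $a,b \in S$ with
\[
d_1(a) \neq 0, \quad d_1(b) = 0, \quad d_2(a) = 0, \quad d_2(b) \neq 0,
\]
and consider the two elements $A_a, A_b$ of $\ext{S}{R}$ obtained by taking $g = 0$ and $f = a$, respectively $f = b$. Carrying out the matrix multiplication, the diagonal and first superdiagonal entries of $A_a A_b$ and $A_b A_a$ coincide (because $S$ is commutative, being central in $R$, and because $d_1,d_2$ are additive), so the commutator $A_a A_b - A_b A_a$ is concentrated in the $(1,3)$ entry. That entry equals
\[
d_1(a) d_2(b) - d_1(b) d_2(a) = d_1(a) d_2(b),
\]
where the right-hand side uses $d_1(b) = 0$ and is nonzero by the standing hypothesis that $d_1(x) d_2(y) \neq 0$ whenever $d_1(x), d_2(y) \neq 0$. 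Hence $A_a$ and $A_b$ do not commute.

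There is no real obstacle here: the argument is essentially bookkeeping in $\M_3(R)$. The one place where care is needed is the $(1,3)$ entry of $A_a A_b$, which picks up cross terms of the form $f \cdot d_2(f') + d_1(f) \cdot f' + g \cdot f' + f \cdot g'$; the centrality of $S$ in $R$ cancels the ``diagonal part'' of the commutator, leaving exactly $d_1(a) d_2(b) - d_1(b) d_2(a)$ as the obstruction to commutativity, which is precisely what the incomparability hypothesis was designed to make nonzero.
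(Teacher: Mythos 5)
Your proof is correct and follows essentially the same route as the paper: the paper also takes the two matrices with $f=a$, $f=b$ and $g=0$ (using $d_2(a)=0$, $d_1(b)=0$ to kill the extra entries), observes that centrality of $S$ makes the commutator vanish except in the $(1,3)$ slot, where it equals $d_1(a)d_2(b)\neq 0$, and disposes of the centrality of $\m$ by the same one-line computation with $gf=fg$. The only blemish is your closing parenthetical: the $(1,3)$ entry of a general product is $f\cdot g' + d_1(f)\cdot d_2(f') + g\cdot f'$, not $f\cdot d_2(f') + d_1(f)\cdot f' + g\cdot f' + f\cdot g'$, but since you set $g=g'=0$ and your displayed commutator entry $d_1(a)d_2(b)-d_1(b)d_2(a)$ is the correct one, this slip does not affect the argument.
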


\begin{proof}
    \def\varphiX{
    \begin{pmatrix}
        x_1 & d_1(x_1) & 0\\
        0 & x_1 & 0\\
        0 & 0 & x_1\\
    \end{pmatrix}
    }
    \def\varphiY{
    \begin{pmatrix}
        x_2 & 0 & 0\\
        0 & x_2 & d_2(x_2)\\
        0 & 0 & x_2\\
    \end{pmatrix}
    }
    \def\varphiT{
    \begin{pmatrix}
        0 & 0 & d_1(x_1)d_2(x_2)\\
        0 & 0 & 0\\
        0 & 0 & 0\\
    \end{pmatrix}
    }
    If $x_1, x_2\in S$ are such that
    $d_1(x_1) \neq 0,d_1(x_2) = 0, d_2(x_1) = 0, d_2(x_2) \neq 0$, then
    \[
    \varphiX\cdot \varphiY - \varphiY \cdot \varphiX = \varphiT \neq 0.
    \]
    The fact the $\m$ is central is clear from the construction and
    centrality of $S$.
\end{proof}

The second statement of the corollary motivates the following definitions,
which are used for brevity and clarity
\begin{defn}
    Let $R$ be an extension of $I$ by $Q$. We will call it
    \begin{enumerate}
        \item \emph{ZC} (central by commutative) iff $I\subseteq
            Z(R)$ and $Q$ is commutative,
        \item \emph{NZC} iff it is ZC and noncommutative.
    \end{enumerate}
\end{defn}
Thus the thesis of \ref{commtononcomm} can be rephrased as ``$\ext{S}{R}$ is a
NZC of $\m$ by $S$''.\newline


Propositions \ref{mainconstruction},\ref{commtononcomm} show that one can apply incomparable
derivations to construct a NZC extension. The following proposition
gives a partial converse.

\begin{prop}
    Let $K$ be a field. Suppose that $R$ is an NZC extension of $I$ by $K$. Then there exist a unitary $(K, K)$--bimodule
    $M$, such that $k\cdot m = m\cdot k$ for all $k\in K, m\in M$,
    and two incomparable derivations from $K$ to $M$.
    \label{DoesNotExistThm}
\end{prop}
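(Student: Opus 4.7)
The plan is to extract the required derivations from the commutator structure of $R$. Since $R/I \cong K$ is commutative, every commutator $[r_1,r_2]$ lies in $I$, and since $I$ is central it depends only on the images $\pi(r_1),\pi(r_2)\in K$ under the projection $\pi\colon R\to K$. Fixing any additive section $s\colon K\to R$, I obtain a biadditive antisymmetric map $c\colon K\times K\to I$, $(k_1,k_2)\mapsto[s(k_1),s(k_2)]$. A direct computation, using that $s(k_1 k_2)-s(k_1)s(k_2)\in I$ is central in $R$, yields the Leibniz-type identity
\[
c(k',k_1 k_2)=s(k_1)\,c(k',k_2)+c(k',k_1)\,s(k_2).
\]

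For the bimodule $M$ I would take (the unitary part of) $I/I^2$. Since $I\cdot I\subseteq I^2$, the $R$-bimodule structure on $I/I^2$ factors through $K=R/I$, making $I/I^2$ a $(K,K)$-bimodule; centrality of $I$ in $R$ yields $k\cdot\bar i=\bar i\cdot k$. To enforce the unitary axiom, set $e:=s(1_K)$ and $M:=e\cdot(I/I^2)\cdot e$. The projection $P\colon I/I^2\to M$, $\bar i\mapsto\overline{eie}$, is a $K$-bimodule homomorphism, because $[e,s(k)]\in I$ acts trivially on $I/I^2$ (the product $[e,s(k)]\cdot i$ lies in $I^2$ for any $i\in I$). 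Consequently $D_{k'}\colon K\to M$, $k\mapsto P(\overline{c(k',k)})$, is a derivation, with the Leibniz rule inherited from the identity above together with the bimodule structure of $M$.

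For incomparability, it suffices to find one pair $k'_0,k_0\in K$ with $D_{k'_0}(k_0)\neq 0$. Granting this, antisymmetry of $c$ gives $D_{k'_0}(k'_0)=0=D_{k_0}(k_0)$ while $D_{k_0}(k'_0)=-D_{k'_0}(k_0)\neq 0$, so $D_{k'_0}$ and $D_{k_0}$ have incomparable kernels with witnesses $x_1=k_0$ and $x_2=k'_0$.

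The main obstacle is ensuring that $P$ does not annihilate every commutator, i.e., that $e\cdot[s(k'_0),s(k_0)]\cdot e\notin I^2$ for some pair $k'_0,k_0$. When $R$ admits an identity lifting $1_K$ one may take $e=1_R$, so $M=I/I^2$, and the condition reduces to $[R,R]\not\subseteq I^2$, i.e., to $R/I^2$ being noncommutative. The harder case $[R,R]\subseteq I^2$ should be handled by descending through the chain $I\supseteq I^2\supseteq\cdots$: working with the minimal $n$ for which $[R,R]\not\subseteq I^n$ and replacing $I/I^2$ by the sub-bimodule $I^{n-1}/I^n$, which is annihilated by $I/I^n$ and hence again a central $K$-bimodule. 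Rings without unit and pathological behaviour of the chain $\{I^n\}$ require additional care in selecting $e$ and in the choice of quotient, but the commutator-based construction itself is robust enough to carry through.
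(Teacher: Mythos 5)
Your commutator-based setup and the antisymmetry trick for incomparability are in the same spirit as the paper's proof (which uses the inner derivations $\operatorname{ad}_x,\operatorname{ad}_y$ attached to a noncommuting pair $x,y$), but your choice of target bimodule leaves a genuine gap that your last paragraph does not close. You place $M$ inside $I/I^2$ (or $I^{n-1}/I^n$), so everything hinges on finding a commutator that survives modulo $I^2$ (resp.\ $I^n$). However, the proposition must cover NZC extensions with $I=I^2$: this is exactly the case produced in the implication $3\Rightarrow 4$ of Theorem~\ref{mainone}, and the idempotent case is the one the whole paper is about. When $I=I^2$ every quotient $I^{n-1}/I^n$ is zero, there is no minimal $n$ with $[R,R]\not\subseteq I^n$, and your module $M$ vanishes, so the construction cannot produce any nonzero derivation, let alone an incomparable pair. (Even when the powers $I^n$ strictly decrease, $[R,R]$ may lie in $\bigcap_n I^n$.) The closing claim that the construction is ``robust enough to carry through'' is precisely the missing step, and it is false as stated.

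The fix --- and the route the paper takes --- is to avoid quotienting by powers of $I$ altogether: take $M=[R,R]$ itself. Commutators lie in $I$ because $K$ is commutative, and centrality of $I$ gives $i(xy-yx)=0=(xy-yx)i$ for all $i\in I$, so the $(R,R)$-bimodule structure on $[R,R]$ factors through $K=R/I$, with $k\cdot m=m\cdot k$ since $[R,R]\subseteq I\subseteq Z(R)$; unitarity follows from the cited fact (Proposition 5.1.ii of Andruszkiewicz--Puczy\l{}owski) that the preimage of $1\in K$ is central in $R$, playing the role your element $e=s(1_K)$ was meant to play. Then $\operatorname{ad}_x,\operatorname{ad}_y\colon R\to[R,R]$ vanish on $I$, hence descend to derivations $K\to M$, and for a noncommuting pair $x,y$ they are incomparable by the same antisymmetry observation you use. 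So your skeleton (commutators depend only on classes mod $I$, antisymmetry gives incomparability) is sound; the essential idea you are missing is that centrality of $I$ makes $[R,R]$ itself a $(K,K)$-bimodule, so no passage to $I/I^2$ is needed --- nor is it possible in the idempotent case.
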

\begin{proof}
    \def\ad#1{\operatorname{ad}_{#1}}
    \def\dx{\tilde{\ad{x}}}
    \def\dy{\tilde{\ad{y}}}
    Let $[R, R]$ be the commutator of $R$, i.e. the smallest ideal of $R$ such
    that $R/[R, R]$ is commutative. Put $M = [R, R]$.
    Since $I$ is central in $R$, for every $i\in I, x, y\in R$ we have $ixy = (ix)y = yix = iyx$, so $i(xy
    - yx) = 0$, thus $I\cdot M = 0$. Similarly $M\cdot I = 0$, so $M$ has a
    $(K, K)$-bimodule structure induced from the $(R, R)$-bimodule structure.

    In \cite{puczylowski} Proposition 5.1.ii it was shown that the preimage in $R$ of
    $1\in K$ is contained in the center of $R$. It means that
    if $r\in R$ belongs to this preimage then
    \[r\cdot (ab - ba) = (ar)b - b(ar) = (a+i)b - b(a+i) = ab - ba\]
    where $i = ar-a \in I$ because the images of
    $a,ar$ in $K$ coincide. Applying centrality of $r$ once
    more we deduce that $M=[R, R]$ is a unitary $(K,K)$--bimodule.

    Since $M \subseteq I \subseteq Z(R)$ we have $r\cdot m = m\cdot r$ for all $r\in
    R,m\in [R, R]$, so $k\cdot m = m\cdot k$ for all $k\in K, m\in [R, R]$.

    For any $t\in R$ the map $\ad{t}:R \to R$
    defined by $\ad{t}(u):=tu-ut$ is in fact a derivation of $R$ to $[R,R]=M$.
    Since $\ad{t}(I) = 0$ we can define the derivation from $K$ to $M$ by $\tilde{\ad{t}}(r + I) := \ad{t}(r)$.
    Let $x, y\in R$ be elements which do not commute, then $\dx(x + I) = 0,
    \dx(y + I) \neq 0, \dy(x + I) \neq 0,
    \dy(y + I) = 0$, so $\dx, \dy: K\to M$ are incomparable.
\end{proof}

\section{Construction 2}

    \def\m{\mathfrak m}

In this section we give a specific construction  of an ideal extension of a
ring $I$ with $\ann{I}{I}\neq 0$ by a given field. Together with the
construction given in the previous section it allows us to solve
\sp{}. Roughly speaking, we construct a nonzero
    homomorphism from a extension of $\m$ by a field $K$ with very strong conditions on
    $\m$ to a extension of $I$ such that $\ann{I}{I} \neq 0$ by $K$. We
    will use this construction to produce a noncommutative extension of an
    ring $I$ satisfying $I^2 = I$ by a field $K$, but it can be applied
    to produce various other types of extensions by a field.

\begin{defn}
    \label{semidirect}
    If $Q$ is a ring and a ring $I$ is an $(Q, Q)$-bimodule such that the following
    \emph{compatibility conditions} are fulfilled
    \[q\cdot (i_1i_2) = (q\cdot i_1)i_2,\ (i_1\cdot q)i_2 =
    i_1(q\cdot i_2),\ (i_1i_2)\cdot q = i_1(i_2\cdot q)\ \ \forall q\in Q,
    i_1,i_2\in I\]
    then $R:=I\oplus_{Ab} Q$ is a ring with respect to the multiplication defined by
    \[
    (i_1, q_1)(i_2, q_2) := (i_1i_2 + i_1\cdot q_2 + q_1\cdot i_2, q_1q_2).
    \]
    If we identify $I$  with ideal $\{(i, 0)\ |\ i\in I\}$ of $R$ and $Q$ with
    $\{0\}\times Q \subseteq R$ then the obtained ring is an extension of $I$ by $Q$.
    We call it the \emph{semidirect extension} of $(Q,
    Q)$--bimodule $I$ by $Q$.
\end{defn}

Note that if $R$ is a semidirect extension of $I$ by $Q$ such that $I$ is central in $R$ and
$Q  \simeq R/I$ is commutative then directly from the definition of multiplication in
$R$ it follows that $R$ is commutative.

\begin{prop}
    \label{merging-lemma}
    Let $S$ be an extension of $\m$ by a field $K$ such that $0 \neq \m/\m^2 \subseteq Z(S/\m^2)$.

    Let $I$ be a $K$--algebra with $\ann{I}{I} := \left\{ i\in I\ |\ iI = Ii
    = 0\right\} \neq 0$.

    For any $m\in\m\setminus\m^2$ there exists a ring $T=T_{m}$ and
    a homomorphism $\varphi: S\to T$ such that
    \begin{enumerate}
        \item $T$ is an extension of $I$ by $K$ and $I$ is central in $T$
            (so $T$ is a ZC extension),
        \item $\varphi(m) \neq 0$,
        \item if $h:T  \rightonto T/I$ is the canonical epimorphism then $h\circ
            \varphi: S \to T/I$ is also an epimorphism.
    \end{enumerate}
\end{prop}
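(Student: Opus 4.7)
The plan is to first pass from $S$ to a quotient $\bar S=S/J$ that still remembers $m$ but has the minimal structure of interest---a one-dimensional, central, square-zero ideal over $K$---and then to replace this ideal by $I$ via a pullback-quotient construction using a chosen nonzero element of $\ann{I}{I}$. The key algebraic fact making the gluing possible is that any $a\in\ann{I}{I}$ satisfies $a^2=0$ and $aI=Ia=0$, which matches exactly the behaviour of a central square-zero ideal.

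For the setup, the hypothesis $\m/\m^2\subseteq Z(S/\m^2)$ makes $\m/\m^2$ a $K$-vector space, so I can pick a $K$-linear functional $\lambda:\m/\m^2\to K$ with $\lambda(\bar m)=1$, and fix some nonzero $a\in\ann{I}{I}$. Put $J:=\{x\in\m\mid\lambda(\bar x)=0\}$. Because $\ker\lambda$ is a sub-$K$-bimodule of the central bimodule $\m/\m^2$, $J$ is an ideal of $S$, $J\supseteq\m^2$, and $m\notin J$. In $\bar S:=S/J$ the ideal $\bar\m:=\m/J$ is central, square-zero, and $\lambda$ descends to a $K$-linear isomorphism $\bar\m\xrightarrow{\sim}K$.

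Now form $T$. Let $I_+:=I\rtimes K$ be the semidirect extension from Definition \ref{semidirect}; since $I$ is commutative and $K$ is central in $I$, this is a ZC extension of $I$ by $K$. Let $\pi:\bar S\rightonto K$ and $h:I_+\rightonto K$ be the canonical projections. Form the fiber product
\[
P:=\bar S\times_K I_+=\{(s,t)\in\bar S\times I_+\mid\pi(s)=h(t)\}
\]
and the ``diagonal'' subset
\[
N:=\{(x,(-\lambda(x)a,0))\mid x\in\bar\m\}\subseteq P.
\]
Using $aI=Ia=0$ together with the identity $\lambda(sx)=\pi(s)\lambda(x)$ for $s\in\bar S,x\in\bar\m$, one checks that $N$ is a two-sided ideal of $P$. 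Set $T:=P/N$, and take $\varphi$ to be the composition $S\rightonto\bar S\xrightarrow{\sigma}P\rightonto T$, where $\sigma(s):=(s,(0,\pi(s)))$ is a ring homomorphism because the canonical inclusion $K\hookrightarrow I_+$ is a ring splitting of $h$.

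Finally I verify the three required properties. The kernel of the induced projection $T\to K$ is $L:=(\bar\m\times I)/N$, and the assignment $(x,(i,0))\mapsto i+\lambda(x)a$ gives a ring isomorphism $L\xrightarrow{\sim}I$ (the kernel is exactly $N$, and multiplicativity is forced by $a^2=0$ and $aI=Ia=0$); centrality of $L$ in $T$ then follows from commutativity of $I$ and centrality of $K$ in $I$. The composition $h\circ\varphi$ equals $\pi$ (under the identification $T/I\cong K$) and is therefore surjective, while $\varphi(m)$ is the class of $(\bar m,(0,0))$, which cannot lie in $N$ because that would force $\lambda(\bar m)a=a=0$. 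The main obstacle is really just the bookkeeping needed to check that $N$ is an ideal of $P$ and that the ring structure on $L$ induced from $P/N$ coincides with the original one on $I$; both are pinned down by the identities $a^2=0$, $aI=Ia=0$, and the $K$-linearity of $\lambda$.
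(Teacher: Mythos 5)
Your argument is correct and, despite the pullback packaging, is essentially the paper's construction: your fiber product $P=\bar S\times_K(I\oplus K)$ is canonically isomorphic to the semidirect extension $I\oplus_{Ab}S$ (with $S$ acting through $K$, and with $\ker\lambda$ absorbed into the quotient) that the paper builds, and your ideal $N$ is exactly the paper's graph ideal $J=\{(\psi(j),j)\ |\ j\in\mathfrak{m}\}$ for the particular choice $\psi(j)=-\lambda(\bar j)\,a$, so your $T$ and $\varphi$ coincide with the paper's. Your $K$-linear functional $\lambda$ with $\lambda(\bar m)=1$ plays precisely the role of the paper's $K$-module epimorphism $\psi':\mathfrak{m}/\mathfrak{m}^2\twoheadrightarrow Ka$ with $\psi'(m)=a$ (and rests on the same tacit use of the induced $K$-module structure on $\mathfrak{m}/\mathfrak{m}^2$), so there is no genuinely new route or gap here.
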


\begin{proof}
    \def\m{\mathfrak m}
    The epimorphism $f: S\rightonto S/\m  \simeq K$ induces a $(S,
    S)$--bimodule structure on $I$, given by $s\cdot i = i\cdot s = f(s)\cdot
    i$. This structure satisfies the compatibility
    conditions from \ref{semidirect}, so we can build an extension $R$ of $I$
    by $S$, which, as an abelian group, has the structure $I\oplus_{Ab} S$.

    Choose any $a\in \ann{I}{I}$. The quotient $J':=\m/\m^2$, containing $m
    \neq 0$, is a $K$-algebra, so we have a left $K$-module epimorphism $\psi': J' \rightonto Ka$ such that
    $\psi'(m) = a$, which is also a left $S$-module epimorphism.

    Let $\psi: \m  \rightonto Ka$ be the composition $\m \rightonto \m/\m^2
    \overset{\psi'}\rightonto Ka$. Let
    \[
    J:=\left\{ (\psi(j), j)\ |\ j\in \m \right\} \subseteq R.
    \]
    Then $J$ is an ideal of $R$, by direct verification (checking the right
    side we make use of the centrality of $\m/\m^2$). Moreover $J\cap I =
    0$ and $R/(I+J) \simeq S/( (I+J)\cap S) = S/\m  \simeq K$ so $R/J$ is an
    extension of $I$ by $K$.

    We set $T:=R/J$. The image of $I$ is central in $T$ because $I$ was central in
    $R$, so $T$ is a ZC extension of $I$ by $K$. Let $\varphi$ be the composition $S\to R \rightonto R/J=T$.
    In $T$ the image of $m$ is equal to the image
    of $0\neq -\psi(m)\in I$, which is nonzero by $I\cap J = 0$, so $\varphi(m)\neq 0$.
\end{proof}
\section{Proof of Theorem \ref{mainone}}

    Before we can prove the theorem \ref{mainone} we need one more ring
    example

    \begin{defn}
        The set $S=\{x^{\alpha}\ |\ \alpha\in (0, 1]\}$ is a semigroup with
        respect to the multiplication defined by
        \[x^{\alpha}\cdot x^{\beta} = \begin{cases}x^{\alpha + \beta} & \hbox{if }\alpha +
            \beta \leq 1\\ 0 &\hbox{otherwise.}\end{cases}\]
        The Zassenhaus algebra $Z = KS$ is defined as the semigroup $K$-algebra of $S$.

        This algebra satisfies $Z^2 = Z$ and has nonzero anihilator, equal to $Kx^1$.
    \end{defn}

    \begin{mainthm}
        For a given field $K$, the following conditions are equivalent
        \begin{enumerate}
            \item\label{mo1} there exists a field $L$ containing $K$ and two derivations
                from $K$ to $L$ whose kernels
                are incomparable with respect to inclusion,
            \item\label{mo2} there exists a noncommutative ring $R$ and its ideal $I$ such that $I$ is
                central in $R$, $R/I \simeq K$ and $I^2 = 0$,
            \item\label{mo3} there exists a noncommutative ring $R$ and its ideal $I$ such that $I$ is
                commutative as a ring, $I =
                I^2$ and $R/I \simeq K$.
            \item\label{mo4} there exists a noncommutative ring $R$ and its ideal $I$ such that $I$ is
                central in $R$ and $R/I \simeq K$.
        \end{enumerate}
    \end{mainthm}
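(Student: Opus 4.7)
The plan is to establish the cycle $(1) \Rightarrow (2) \Rightarrow (4) \Rightarrow (1)$ together with $(1) \Rightarrow (3) \Rightarrow (4)$, which jointly force the four conditions to be equivalent. The two substantial forward arrows use the constructions from Sections 3 and 4, while $(4) \Rightarrow (1)$ inverts Proposition \ref{DoesNotExistThm}.

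The easy directions first: $(2) \Rightarrow (4)$ is immediate (drop the hypothesis $I^2 = 0$), and $(3) \Rightarrow (4)$ is the observation from the introduction that a commutative idempotent ideal is automatically central. For $(1) \Rightarrow (2)$, apply Construction 1 with $S = K$ and $R = L$: since $L$ is a field, $K$ is central in $L$ and $d_1(a) d_2(b) \neq 0$ whenever both factors are nonzero, so Corollary \ref{commtononcomm} yields $\ext{K}{L}$ as an NZC extension of $\m$ by $K$, while Proposition \ref{mainconstruction} provides $\m^2 = 0$. For $(4) \Rightarrow (1)$, Proposition \ref{DoesNotExistThm} produces two incomparable derivations $\delta_1, \delta_2 \colon K \to M$ into a unitary $(K, K)$-bimodule $M$ with $k \cdot m = m \cdot k$, so $M$ is a $K$-vector space. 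Choosing witnesses $x_1, x_2 \in K$ and $K$-linear functionals $\ell_i \colon M \to K$ with $\ell_i(\delta_i(x_i)) \neq 0$, a short verification (using the symmetry $k \cdot m = m \cdot k$) shows that $\ell_i \circ \delta_i \colon K \to K$ are derivations that remain incomparable on $x_1, x_2$, which is (1) with $L = K$.

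The principal step is $(1) \Rightarrow (3)$, where both constructions are used in tandem. Start with $S = \ext{K}{L}$ from the proof of $(1) \Rightarrow (2)$: one has $\m^2 = 0$, $\m$ central in $S$, and hence $0 \neq \m/\m^2 = \m \subseteq Z(S) = Z(S/\m^2)$, which is the hypothesis of Proposition \ref{merging-lemma}. The explicit commutator computed in the proof of Corollary \ref{commtononcomm} pins down a specific nonzero element $m_0 = X_1 X_2 - X_2 X_1 \in \m \setminus \m^2$ for particular $X_1, X_2 \in S$. Now take $I$ to be the Zassenhaus algebra $Z$ over $K$: it is commutative, satisfies $Z = Z^2$, and has nonzero annihilator. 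Proposition \ref{merging-lemma} applied with this $m_0$ delivers a ring $T$ (a ZC extension of $Z$ by $K$) and a homomorphism $\varphi \colon S \to T$ with $\varphi(m_0) \neq 0$. Since $\varphi$ is a ring homomorphism, $\varphi(m_0) = \varphi(X_1)\varphi(X_2) - \varphi(X_2)\varphi(X_1) \neq 0$, so $T$ is noncommutative and $(T, Z)$ realises (3).

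The main obstacle is precisely in $(1) \Rightarrow (3)$: it is not enough to invoke Proposition \ref{merging-lemma} with an arbitrary $m_0$; one must choose $m_0$ to be the designated commutator produced by Construction 1 so that the ring-homomorphism property of $\varphi$ transports noncommutativity from $S$ into $T$. Everything else is either routine bookkeeping or delivered by the preceding results.
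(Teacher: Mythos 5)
Your proposal is correct, and its substance coincides with the paper's: Construction 1 for $(1)\Rightarrow(2)$, Proposition \ref{merging-lemma} with the Zassenhaus algebra and a chosen commutator $m_0$ for producing the idempotent example, and Proposition \ref{DoesNotExistThm} for recovering incomparable derivations. The decomposition differs only mildly — the paper runs the single cycle $(1)\Rightarrow(2)\Rightarrow(3)\Rightarrow(4)\Rightarrow(1)$, and your $(1)\Rightarrow(3)$ is exactly its $(2)\Rightarrow(3)$ specialized to $S=\ext{K}{L}$, which is harmless since your implication graph still yields the full equivalence. The one genuinely different step is $(4)\Rightarrow(1)$. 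After Proposition \ref{DoesNotExistThm} gives incomparable $\delta_1,\delta_2\colon K\to M$, the paper enlarges $M$ to an infinite-dimensional $K$-space $M'$ and identifies $M'$ $K$-linearly with a purely transcendental extension $L=K(\{y_i\})$ by a cardinality count, so the derivations land in a field $L\supseteq K$ with their kernels unchanged. You instead compose with left $K$-linear functionals $\ell_i\colon M\to K$ satisfying $\ell_i(\delta_i(x_i))\neq 0$; since the action on $M$ is symmetric, each $\ell_i$ is a bimodule map, so $\ell_i\circ\delta_i$ is a derivation $K\to K$, and the four witness conditions persist because $\delta_1(x_2)=\delta_2(x_1)=0$ are killed by any functional, even though the kernels may grow. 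This verification is as short as you claim, avoids the transcendental extension and cardinal arithmetic entirely, and in fact gives the slightly stronger conclusion that condition 2 of Theorem \ref{maintwo} (derivations into $K$ itself) follows from $(4)$ for an arbitrary field $K$; the paper's embedding buys only the cosmetic advantage of transporting the original derivations injectively rather than their scalar projections.
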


    \begin{proof}
        $\ref{mo1}.\implies\ref{mo2}.$ Let $R = \ext{K}{L}$ be the ring obtained
        from construction \ref{extconstruction} applied to subring $K$ of ring $L$ and two
        derivations given. Corollary \ref{commtononcomm} says that
        $R$ is a noncommutative extension of $\m$ by $K$, $\m$ is central in
        $R$ and $\m^2 = 0$, so $R$ proves \ref{mo2}.

        $\ref{mo2}.\implies\ref{mo3}.$
        Let $R, I$ be the ring and ideal from \ref{mo2} and
        $Z$ be the Zassenhaus algebra over $K$.
        Choose $x,y\in R$ such that $xy-yx\neq 0$ and put $m:=xy-yx$,
        then $R, I, Z, m$ satisfy the conditions
        for $S, \m, I, m$ of proposition \ref{merging-lemma}.
        If $T:=T_{m}$ the extension obtained in \ref{merging-lemma} and $\varphi: S\to
        T$ is the corresponding
        homomorphism, then $0\neq \varphi(xy-yx) = \varphi(x)\varphi(y) -
        \varphi(y)\varphi(x)$, so $T$ is a
        noncommutative extension of $Z = Z^2$ by $R/I  \simeq K$.

        $\ref{mo3}.\implies\ref{mo4}.$ If $i_1, i_2\in I, r\in
        R$ then $i_1i_2r = i_1(i_2r) = (i_2r)i_1 = i_2(ri_1) = ri_1i_2$, so
        $I^2 = I$ is central in $R$ and the implication is obvious.

        \def\I{\mathcal{I}}
        $\ref{mo4}.\implies\ref{mo1}.$ The ring $R$ from \ref{mo4} is a NZC by
        $K$, so we can apply proposition \ref{DoesNotExistThm}.

        Let $M$ and $d, d':K\to M$ be the $(K, K)$--bimodule and incomparable derivations
         resulting from applying \ref{DoesNotExistThm}. As $M$ is unitary, we can view
            $M$ as an (left) $K$ vector space. We take a direct sum of it with
            another $K$-vector space obtaining an infinitely dimensional
            vector space $M'$ having basis
            $\I = (x_i)_{i\in \I}$.

            Let $L:=K\left(\{y_{i}\}_{i\in \I}\right)$ be the purely
            transcendental extension of $K$. As $L$ has a basis of cardinality
            $(\card \I)\cdot \aleph_0 = \card \I$ we can take
            a bijection of $\I$ with this basis and extend it to a $K$-linear
            isomorphism $M' \to L$.
            As $k\cdot l = l\cdot k$ and $k\cdot m =
            m\cdot k$ for any $k\in K, l\in L, m\in M'$, the isomorphism preserves
            the $(K, K)$--bimodule structure, so compositions of $d_1, d_2$
            which $M \rightinto M' \to L$ are incomparable derivations from $K$ to field $L$.
    \end{proof}

\begin{cor}
    Let $K$ be an algebraic extension of a prime field. Then there does not
    exists a NZC by $K$.
\end{cor}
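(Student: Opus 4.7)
The plan is to obtain this corollary as an immediate combination of Theorem \ref{mainone} with Corollary \ref{incomparabledontexists}. By definition an NZC extension by $K$ is precisely a noncommutative ring $R$ containing a central ideal $I$ with $R/I\simeq K$, i.e.\ condition \ref{mo4} of Theorem \ref{mainone} (the commutativity of $K$ is automatic, since $K$ is a field).

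First I would argue by contradiction: assume an NZC extension by $K$ exists, so condition \ref{mo4} of Theorem \ref{mainone} is satisfied. Applying the implication $\ref{mo4}\implies\ref{mo1}$ of Theorem \ref{mainone}, I obtain a field $L$ containing $K$ and two derivations $d_1,d_2\colon K\to L$ whose kernels are incomparable.

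Next I would invoke Corollary \ref{incomparabledontexists}. Since $K$ is algebraic over its prime subfield, that corollary asserts that for every field $L$ containing $K$, every two derivations from $K$ to $L$ are comparable. This directly contradicts the existence of the pair $d_1,d_2$ produced above, concluding the proof.

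There is essentially no obstacle to this step: all the substantive work has been done earlier in the paper, in the construction of Section 2 (for proving Corollary \ref{incomparabledontexists} via Proposition \ref{algebraically-closed}) and in Proposition \ref{DoesNotExistThm} (which supplies the implication $\ref{mo4}\implies\ref{mo1}$ of Theorem \ref{mainone}). The corollary is really a restatement, on the ring-theoretic side, of the absence of incomparable derivations in the algebraic-over-prime-subfield case, and its role is to serve as a capstone confirming that the characterization in Theorem \ref{mainone} behaves as expected when $K$ is, for instance, a number field, a finite field, or $\overline{\mathbb{F}_p}$.
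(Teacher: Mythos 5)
Your proof is correct and is exactly the paper's argument: the paper also deduces the corollary directly from Theorem \ref{mainone} (implication \ref{mo4}$\implies$\ref{mo1}) combined with Corollary \ref{incomparabledontexists}. Your write-up merely spells out the contradiction that the paper leaves implicit.
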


\begin{proof}
    This follows directly from theorem \ref{mainone} and corollary \ref{incomparabledontexists}.
\end{proof}

\section{Comments}

If $K$ is a field and $d_1,d_2:K\to K$ are incomparable derivations,
then the ring $\ext{K}{K}$ has only 3 (left or right
or both-sided) ideals: $\ext{K}{K}, \m$ and $0$. It has an unity, thus it is local and left (and right)
artinian of length $2$. This shows a similarity of $\ext{K}{K}$ to the ring
$K^0$ -- the abelian group $(K, +)$ equipped with zero multiplication -- with
unity adjoined.

\paragraph{Another application of construction \ref{mainconstruction},}
mentioned in the introduction.

\begin{prop}
    Let $k$ be a field and $R$ be the group algebra $k\ring{x, y}$ of the free
    group $\gen{x, y}$. The commutator ideal $(xy-yx)$ of $R$
    does not satisfy $(xy-yx)^2 = (xy-yx)$.
\end{prop}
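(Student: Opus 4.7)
The plan is to use construction \ref{extconstruction} as a test ring that detects $xy - yx \not\in I^2$, where $I := (xy - yx)$. Let $S = k[x, x^{-1}, y, y^{-1}]$ be the Laurent polynomial ring in two variables, viewed as a subring of itself, and take the partial derivations $d_1 = \partial/\partial x$, $d_2 = \partial/\partial y$ from $S$ to $S$. Construction \ref{extconstruction} then produces a ring $T := \ext{S}{S}$ together with an ideal $\m$ of $T$ satisfying $\m^2 = 0$ and $T/\m \cong S$.

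Let $X, Y \in T$ be the matrices corresponding to $f = x$ and $f = y$ respectively; their diagonal entries are the units $x, y \in S$, and a direct inversion of a $3 \times 3$ upper-triangular matrix shows $X, Y$ are units in $T$. By the universal property of the free group algebra one obtains a $k$-algebra homomorphism $\varphi : R \to T$ with $\varphi(x) = X$ and $\varphi(y) = Y$.

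Post-composing $\varphi$ with the canonical projection $T \twoheadrightarrow T/\m \cong S$ gives a $k$-algebra map $R \to S$ sending $x, y$ to themselves. Since $S$ is commutative, this map kills $xy - yx$ and factors through $R/I$; upon identifying $R/I \cong S$ via $x \mapsto x$, $y \mapsto y$, the induced map is the identity. It follows that $\varphi(I) \subseteq \m$, hence $\varphi(I^n) \subseteq \m^n = 0$ for every $n \geq 2$. A short matrix computation yields
\[
\varphi(xy - yx) \;=\; XY - YX \;=\; \begin{pmatrix} 0 & 0 & 1 \\ 0 & 0 & 0 \\ 0 & 0 & 0 \end{pmatrix} \;\neq\; 0,
\]
so $xy - yx \notin I^2$ and idempotence of $I$ fails.

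The main obstacle is a pair of routine verifications: invertibility of $X, Y$ in the upper-triangular ring $T$, and the identification $R/I \cong S$ via $x \mapsto x$, $y \mapsto y$ so that the induced map from $R/I$ is the identity. Neither is deep; the whole argument reduces to the slogan ``reduce modulo $\m$ and exploit $\m^2 = 0$''.
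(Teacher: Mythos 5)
Your proposal is correct and follows essentially the same route as the paper: apply Construction \ref{extconstruction} to the Laurent polynomial ring (the group algebra of the free abelian group, i.e.\ $R/(xy-yx)$) with the two partial derivations, use invertibility of the resulting upper-triangular matrices and the universal property of the free group algebra to get $\varphi: R \to T$, and conclude from $\varphi(I)\subseteq\m$, $\m^2=0$, $\varphi(xy-yx)\neq 0$ that $I\neq I^2$. Your write-up in fact spells out more explicitly than the paper why $\varphi(I)\subseteq\m$ forces $xy-yx\notin I^2$; the only cosmetic difference is that the paper first takes derivations on the free group algebra and passes to the abelianization, whereas you define them directly on the Laurent ring.
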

\begin{proof}
Let $k$ be a field, $G = \gen{x, y}$ be the free group generated by two
elements and $R$ be the group algebra of the group $G$. Let
$R_{ab}:= R/(xy-yx)$ be the group algebra of the free abelian group.

Let $d_1 = \frac{\partial}{\partial x}, d_2 = \frac{\partial}{\partial y}$ be the
derivations of $R$. They preserve $(xy-yx)$, to they induce derivations of
$R_{ab}$, denoted for simplicity $d_1,d_2$.
\def\varphiX{
\begin{pmatrix}
    x & d_1(x) & 0\\
    0 & x & 0\\
    0 & 0 & x\\
\end{pmatrix}
}%
\def\varphiY{
\begin{pmatrix}
    y & 0 & 0\\
    0 & y & d_2(y)\\
    0 & 0 & y\\
\end{pmatrix}
}%
Let $(\ext{S}{R})_{ab}$ be the extension resulting from applying \ref{mainconstruction} to
$S=R=R_{ab}, d_1, d_2$.

Since $\varphiX, \varphiY$ are invertible in
$(\ext{S}{R})_{ab}$ we can construct a homomorphism
\[
\varphi: R \to (\ext{S}{R})_{ab}.
\]

\def\varphiI{
\begin{pmatrix}
    0 & 0 & 1\\
    0 & 0 & 0\\
    0 & 0 & 0\\
\end{pmatrix}
}
We calculate that $\varphi(xy - yx) = \varphiI$, so the image of the ideal
$(xy-yx)$
lies in ideal with zero multiplication, so $(xy-yx)$ does not satisfy
$(xy-yx)^2 = (xy-yx)$.
\end{proof}

Similarly, using more that one derivation, one obtains
\begin{cor}
    Let $k$ be a field and $R$ be the group algebra $k\ring{x, y}$ of the free
    group $\gen{x, y}$. The ideals $I = (xy-yx), I^2,
    I^3, \dots$ of $R$ are all distinct.
\end{cor}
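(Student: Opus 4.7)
The plan is to produce, for each $n \geq 1$, a ring homomorphism $\varphi_n : R \to T_n$ such that $\varphi_n((xy-yx)^n) \neq 0$ but $\varphi_n(I^{n+1}) = 0$; this forces $(xy-yx)^n \in I^n \setminus I^{n+1}$ and so establishes the distinctness of all the $I^n$.

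To build $T_n$, I would generalize Proposition \ref{mainconstruction} from $3\times 3$ to $(2n+1)\times(2n+1)$ matrices over $R_{ab} := R/(xy-yx)$. Fix an alternating pattern $i(k) \in \{1,2\}$, say $i(k) = 1$ for $k$ odd and $i(k) = 2$ for $k$ even, and let $T_n \subseteq \M_{2n+1}(R_{ab})$ consist of matrices of the form
\[
    f \cdot I + \sum_{k=1}^{2n} d_{i(k)}(f)\, E_{k, k+1} + \sum_{j - i \geq 2} c_{ij}\, E_{ij},
\]
with $f, c_{ij} \in R_{ab}$, where $d_1, d_2$ are the derivations on $R_{ab}$ induced by $\partial/\partial x, \partial/\partial y$. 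A Leibniz-rule calculation (analogous to the $n = 1$ case handled in Proposition \ref{mainconstruction}) shows $T_n$ is a subring: the product of two such matrices has diagonal $ff'$, super-diagonal $d_{i(k)}(ff')$, and all other entries at distance $\geq 2$ where the constraint is vacuous, so the cross terms $d_{i(k)}(f) d_{i(k+1)}(f')$ are harmlessly absorbed. Let $\m \subseteq T_n$ be the two-sided ideal of matrices with $f = 0$; its elements have nonzero entries only at distance $\geq 2$ from the diagonal, so elements of $\m^k$ have nonzero entries only at distance $\geq 2k$. In particular $\m^n$ can be nonzero only in the corner position $(1, 2n+1)$, and $\m^{n+1} = 0$.

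To build $\varphi_n$, define $X, Y \in T_n$ by taking $f = x$ and $f = y$ respectively (with all $c_{ij} = 0$). Both are invertible in $T_n$: each equals a unit of $R_{ab}$ times the identity plus a strictly upper triangular (hence nilpotent) correction, so a geometric series inverts them. The universal property of the group algebra then extends $x \mapsto X$, $y \mapsto Y$ to a ring homomorphism $\varphi_n : R \to T_n$. A direct matrix computation gives $XY - YX \in \m$ (its diagonal and super-diagonal vanish because $xy = yx$ in $R_{ab}$), with $(k, k+2)$-entry equal to $d_{i(k)}(x) d_{i(k+1)}(y) - d_{i(k)}(y) d_{i(k+1)}(x) = \pm 1$ thanks to alternation. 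Therefore $((XY - YX)^n)_{1, 2n+1}$ telescopes into a product of $\pm 1$'s and is nonzero, so $\varphi_n((xy-yx)^n) \neq 0$. Meanwhile $\varphi_n(xy - yx) \in \m$ and $\m$ is two-sided, so $\varphi_n(I) \subseteq \m$ and $\varphi_n(I^{n+1}) \subseteq \m^{n+1} = 0$. This gives the corollary.

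The main obstacle is the bookkeeping verifying that $T_n$ is closed under multiplication --- i.e.\ that the constrained diagonal and super-diagonal entries stay of the prescribed form under products, while the only place the off-pattern cross terms $d_{i(k)}(f) d_{i(k+1)}(f')$ can land is in the unconstrained part (at distance $\geq 2$). Once this is checked, the nilpotency index of $\m$ and the commutator computation are routine.
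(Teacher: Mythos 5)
Your construction is correct and is essentially the paper's intended argument: the paper proves only the $n=1$ case (the preceding proposition, via the $3\times 3$ construction) and disposes of the corollary with the remark that it follows ``similarly, using more than one derivation,'' which is exactly your $(2n+1)\times(2n+1)$ generalization with $d_1,d_2$ alternating along the superdiagonal, a nilpotent ideal of index $n+1$, and the commutator image having $\pm 1$ entries two steps above the diagonal. The only detail you leave implicit --- that $X^{-1}$ and $Y^{-1}$ lie in $T_n$ itself and not merely in $\mathbb{M}_{2n+1}(R_{ab})$, which is needed to map the free group into the unit group of $T_n$ --- is immediate, since $X^{-1}=x^{-1}I-x^{-2}\sum_{k\ \mathrm{odd}}E_{k,k+1}$ and $d_1(x^{-1})=-x^{-2}$, $d_2(x^{-1})=0$ (and symmetrically for $Y$).
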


\section{Acknowledgements}
    The author expresses his great gratitude to Professor Edmund R. Puczylowski for
    his help during the preparation of the paper, especially his patient
    verification of numerous drafts.

\end{document}